\author{J Marshall Ash, Michael Ash, Rafael Ash, Ángel Plaza}
\theoremstyle{plain}
\newtheorem{theorem}{Theorem}
\theoremstyle{definition}
\newtheorem{conjecture}[theorem]{Conjecture}
\pgfplotsset{compat=1.3}
\begin{document}

\title{Iterated harmonic numbers}
\markright{Iterated harmonic numbers}

\maketitle

\begin{abstract}
 The harmonic numbers are the sequence $1, 1+1/2, 1+1/2+1/3, \cdots$.
  Their asymptotic difference from the sequence of the natural logarithm of the positive integers is Euler's constant gamma.
  We define a family of natural generalizations of the harmonic numbers.
  The $j$th iterated harmonic numbers are a sequence of rational numbers that nests the previous sequences and relates in a similar way to the sequence of the $j$th iterate of the natural logarithm of positive integers.
  The analogues of several well-known properties of the harmonic numbers also hold for the iterated harmonic numbers, including a generalization of Euler's constant.
  We reproduce the proof that only the first harmonic number is an integer and, providing some numeric evidence for the cases $j = 2$ and $j = 3$, conjecture that the same result holds for all iterated harmonic numbers.
 We also review another proposed generalization of harmonic numbers. 
\end{abstract}

\noindent
\section{Introduction: definitions and properties}\label{sec:introduction}

The harmonic numbers are the sequence $\left\{ 1,1+\frac{1}{2},1+\frac{1}{2}+\frac{1}{3},\cdots \right\} $; 
we denote them as $h_{1}\left( n\right) :=\sum_{k=1}^{n}\frac{1}{k}$.
Some of the properties of the harmonic numbers $h_{1}\left( n\right)$ are:

\begin{enumerate}
\item They are positive rational numbers, starting at $1$ and monotonically
increasing.

\item They are the partial sums of the divergent infinite series $\sum_{k=1}^{\infty }\frac{1}{k}$.

\item They have a direct connection with the natural logarithm, in
particular there is defined a constant $\gamma $ so that as $n\rightarrow \infty$, $h_{1}\left( n\right) -\ln n\rightarrow \gamma$.

\item They are close to similar, but convergent, sequences; $\sum_{k=1}^{\infty }\frac{1}{k}$ diverges, but, for any fixed small positive number $\epsilon$,  $\sum_{k=1}^{\infty}\frac{1}{k^{1+\epsilon }}$ converges.

\item The only harmonic number that is an integer is $1$.
\end{enumerate}

\section{\mbox{Iterated harmonic numbers and iterated logarithms}}

We define $h_{j}(n)$, the iterated harmonic numbers of order $j$, for $j=2,3,\cdots $. First define 
$h_{2}\left( n\right) :=\sum_{k=1}^{n}\frac{1}{kh_{1}\left( k\right) }$, then $h_{3}\left( n\right) :=\sum_{k=1}^{n}\frac{1}{kh_{1}\left( k\right) h_{2}\left( k\right) }$, and so on. 
For every integer $j\geq 2$, 
\begin{equation*}
h_{j}\left( n\right) :=\sum_{k=1}^{n}\frac{1}{kh_{1}\left( k\right) \label{eqn:iterated-h-sub-j}
h_{2}\left( k\right) \cdots h_{j-1}\left( k\right) }.
\end{equation*}

The natural logarithm is $\ln_1(x)=\ln x:=\int_{1}^{n}\frac{dt}{t}$.  The second iterated natural logarithm is
$\ln_2 {\left(x\right)} = \ln \ln x :=\int_{e}^{x}\frac{dt}{t\ln t}$.\footnote{This notation for the second iteration of the natural logarithm should
not be confused with $\log_2 n$, the base~$2$ logarithm of $n$. Only natural logarithms will be iterated in this work.} To see this, integrate
using the substitution $u= \ln t$. In general, the $j$th iterated logarithm is given inductively by

$$  \ln_{j}\left( x \right):= \int_{{}^ {j-1 }e}^x \frac{dt}{{t\ln_1 t\ln _{2}t\cdots \ln _{j-1}t  }}.  $$ \label{eqn:iterate-ln-sub-j}

To see this, use the substitution $u= \ln _{j-1}t, du=\frac{dt}{t \ln_1 t\ln _{2}t\cdots \ln _{j-2}t}$. In order to avoid a non-zero constant term
the lower limit of the integral must be ${}^{j-1}e$ where the constants  
$\{{}^ie\}$ are given by ${}^0e = 1,  {}^1e = e, {}^2e = e^e, {}^3e =e^{{}^2e} = e^{e^e},\dots. $
 For each $i = 0,1,2,\dots$, the expression ${}^ie$ is called the $i$th hyperpower of $e$.
 
  Let $(d_j,\infty)$ denote the domain of $\ln_j$. Then $d_1=0, d_2={}^0e=1, d_3={}^1e=e\approx 2.7,
 d_4={}^2e  = e^e \approx 15.2, d_5 = {}^3e \approx 3814279.1$, and $d_6={}^4e$. The number ${}^4e$ is too large to calculate,
 ${}^4e = 10^{(\log_{10}e)( {}^3e)} \approx {10}^{1656520}$ a number so large that it would take more than $1.6$ million decimal digits 
 just to write it down.

In order to draw a connection between the $j$th-iterated harmonic number and the $j$th-iterated logarithm, we also define a third quantity. It is the integer stepwise constant estimate for the integral defining the iterated logarithm:

 \begin{equation}
l_j{(n)} :=  \sum_{k=a}^n\frac{1}{k\ln k\ln_{2}k\cdots \ln _{j-1}k}.   \label{eqn:iterated-l-sub-j}
\end{equation}
The lower limit $a=a(j)$ is the smallest positive integer in the domain of the integral defining $\ln_j$. Thus $a(1)=1,a(2)=2$, 
and $a(j)=\lceil {}^{j-2}e \rceil$ for $j=3,4,\dots$.

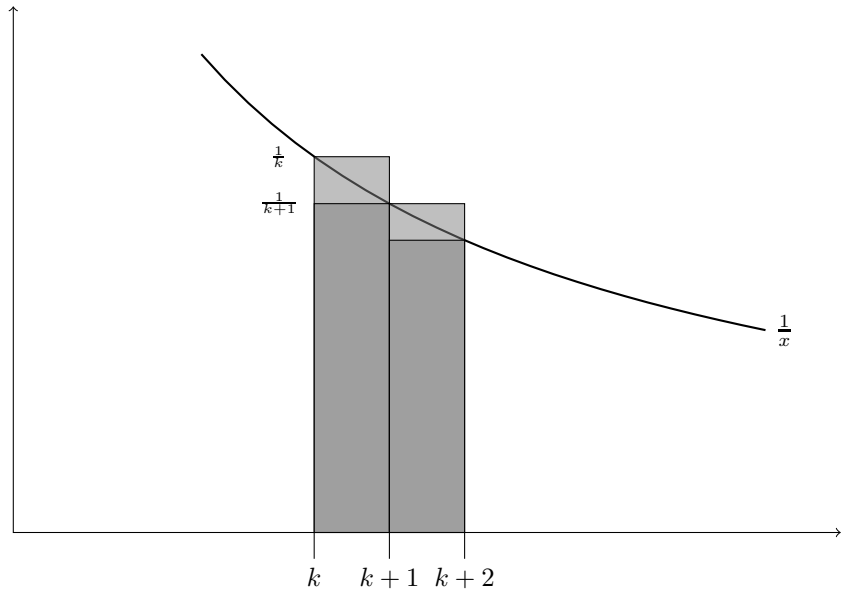
\begin{figure}[H]
\begin{center}
\begin{tikzpicture}[domain=5.5:13,yscale=35]
  \draw[color=black, thick]   plot (\x,{1/\x })  node[right] {$\frac{1}{x}$};

  \draw[<->] (3,0.2) -- (3,0) -- (14,0) ;

  \tikzmath{
    \starting = 7 ;
    \aln = 1 / (\starting) ;
    \bln = 1 / (\starting+1) ;
    \cln = 1 / (\starting+2) ;
  }

  \draw (\starting,0) -- (\starting,-0.01) node [black,below] {$k$} ;
  \draw (\starting+1,0) -- (\starting+1,-0.01) node [black,below] {$k+1$} ;
  \draw (\starting+2,0) -- (\starting+2,-0.01) node [black,below] {$k+2$} ;

  \draw [fill=gray, fill opacity=0.5, draw opacity=1.0] (\starting+1,0) rectangle (\starting,\aln)  node[align=center, text width=0.5cm, above=0.0cm, left=0.1cm, opacity=1.0] {\tiny $\frac{1}{k}$} ;
  \draw [fill=gray, fill opacity=0.5, draw opacity=1.0] (\starting+1,0) rectangle (\starting,\bln) node[align=center, text width=0.5cm, above=0.0cm, left=0.1cm, opacity=1.0] {\tiny $\frac{1}{k+1}$} ;

  \draw [fill=gray, fill opacity=0.5, draw opacity=1.0] (\starting+1,0) rectangle (\starting+2,\bln) ;
  \draw [fill=gray, fill opacity=0.5, draw opacity=1.0] (\starting+1,0) rectangle (\starting+2,\cln) ;

\end{tikzpicture}
\caption{Upper and lower step function bounds for $\frac{1}{x}$.}
\label{fig:one-over-x}
\end{center}
\end{figure}

Identify a sequence $\{ S(k) \}$ with a step function according to the rule that the function take the constant value $S(k)$ at every point $x$ of the interval $k \le x < k+1$. Figure 1 illustrates property 3, the connection between harmonic numbers and natural logarithm.  The area  $\int_1^n \frac{dx}{x} = \ln x$ under the function $\frac{1}{x}$ is greater than the darkly shaded area under the step function $\{\frac{1}{2}, \frac{1}{3}, \ldots , \frac{1}{n} \}$  which evaluates to $h_1(n) -1$.  On the other hand the area $\int_1^n \frac{dx}{x}$ is less than the area under the other step function $\{1, \frac{1}{2}, \ldots, \frac{1}{n-1} \}$. This is the total shaded area which evaluates to  $1 + \frac{1}{2} + \cdots + \frac{1}{n-1} = h_1(n-1)$  Analysis of the small differences between the three regions leads to the problem of accurately quantifying the difference between the harmonic numbers $h_1(n)$ and $\ln n$. 

For any higher order iteration $j$, the geometry is similar. The full complexity is already visible for $j=2$, which is illustrated in Figure~2.
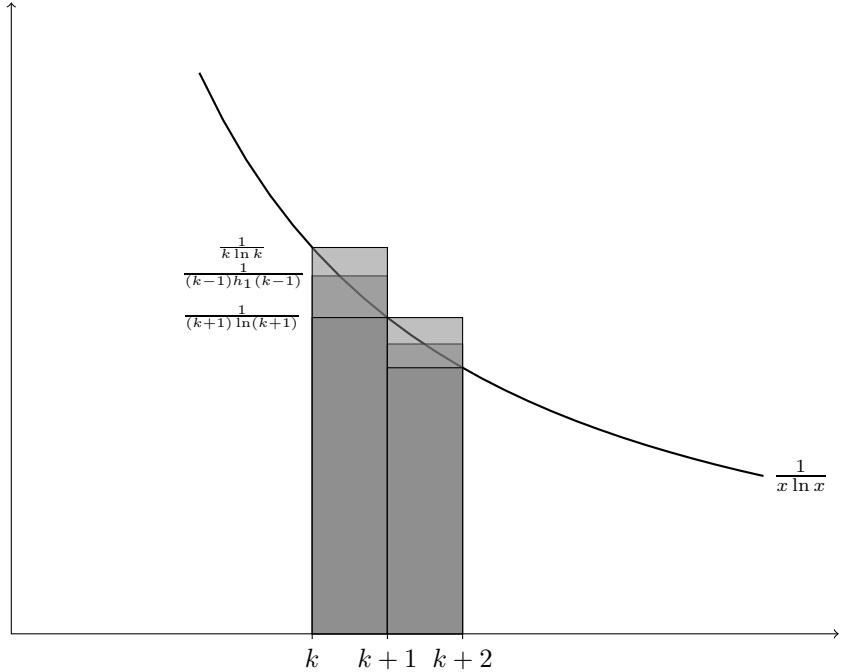
\begin{figure}[H]
\begin{center}
  \begin{tikzpicture}[domain=5.5:13,yscale=70]

    \tikzmath{
      \starting = 7 ;
      \aln = 1 / (\starting*ln(\starting)) ;
      \bln = 1 / ((\starting+1)*ln(\starting + 1)) ;
      \cln = 1 / ((\starting+2)*ln(\starting + 2)) ;
      \zh  = 1 / (6*(1 + 1/2 + 1/3 + 1/4 + 1/5 + 1/6)) ;
      \ah  = 1 / (7*(1 + 1/2 + 1/3 + 1/4 + 1/5 + 1/6 + 1/7)) ;
      \bh  = 1 / (8*(1 + 1/2 + 1/3 + 1/4 + 1/5 + 1/6 + 1/7 + 1/8)) ;
      \ch  = 1 / (9*(1 + 1/2 + 1/3 + 1/4 + 1/5 + 1/6 + 1/7 + 1/8 + 1/9)) ;
    }

    \draw[<->] (3,0.12) -- (3,0) -- (14,0) ;

    \draw[color=black, thick]   plot (\x,{ 1 / ( (\x) * ln (\x) )  })  node[right] {$\frac{1}{x \ln x}$};
    
    \draw (\starting,0) -- (\starting,-0.001) node [black,below] {$k$} ;
    \draw (\starting+1,0) -- (\starting+1,-0.001) node [black,below] {$k+1$} ;
    \draw (\starting+2,0) -- (\starting+2,-0.001) node [black,below] {$k+2$} ;

  \draw [fill=gray, fill opacity=0.5, draw opacity=1.0] (\starting,\zh) node[align=center, text width=1.6cm, above=0.0cm, left=0.01cm, opacity=1.0] {\tiny $\frac{1}{(k-1) h_1(k-1)}$} rectangle (\starting+1,0)  ; 

  \draw [fill=gray, fill opacity=0.5, draw opacity=1.0] (\starting,\aln) node[align=center, text width=1.6cm, above=0.0cm, left=0.01cm, opacity=1.0] {\tiny $\frac{1}{k \ln k}$}         rectangle (\starting+1,0)  ;
  \draw [fill=gray, fill opacity=0.5, draw opacity=1.0] (\starting,\bln) node[align=center, text width=1.6cm, above=0.0cm, left=0.01cm, opacity=1.0] {\tiny $\frac{1}{(k+1) \ln (k+1)}$} rectangle (\starting+1,0)  ;
  \draw [fill=gray, fill opacity=0.5, draw opacity=1.0] (\starting+2,\ah)       rectangle (\starting+1,0)  ; 

  \draw [fill=gray, fill opacity=0.5, draw opacity=1.0] (\starting+1,0) rectangle (\starting+2,\bln) ; 
  \draw [fill=gray, fill opacity=0.5, draw opacity=1.0] (\starting+1,0) rectangle (\starting+2,\cln) ; 


\end{tikzpicture}
\caption{Curve $\frac{1}{x \ln x}$ and step functions  $ \left\{ \frac{1}{k \ln k} \right\} $, $ \left\{ \frac{1}{(k-1) h_1(k-1)} \right\} $, and $ \left\{ \frac{1}{(k+1) \ln (k+1)} \right\} $.}
\label{fig:one-over-x-log-x}
\end{center}
\end{figure}

The area $\int_2^n \frac{dx}{x \ln x} \approx \ln_2 x + 0.37\ldots$ under the function $\frac{1}{x \ln x}$ is greater than the darkly shaded area under the step function $\left\{ \frac{1}{3 \ln 3}, \frac{1}{4 \ln 4}, \cdots, \frac{1}{n \ln n} \right\}$, which evaluates to $\ln_2 (n) - \frac{1}{2 \ln 2} $.  On the other hand, the area $\int_2^n \frac{dx}{x \ln x}$ is less than the area under the other step function $\left\{\frac{1}{2 \ln 2}, \frac{1}{3 \ln 3}, \frac{1}{4 \ln 4}, \cdots, \frac{1}{(n-1) \ln (n-1)} \right\}$. This last term is the total shaded area which evaluates to $\frac{1}{2 \ln 2} + \frac{1}{3 \ln 3} + \cdots + \frac{1}{(n-1) \ln (n-1)} = \l_2(n-1)$. Notice that $\l_1(n)$ coincides with $h_1(n)$, which is close to $\ln(x)$.  Similarly $\l_2(x)$ is close to $\ln_2(x)$. When we quantify this, we will naturally arrive at a constant. We can do a similar operation for each $j \ge 3$, which will be discussed in Section~\ref{sec:asymptotic-estimates}.

Look at the segment labeled $\frac{1}{(k-1)h_1(k-1)}$. Numerical calculations indicate that this segment's height satisfies
\begin{equation}
\frac{1}{(k+1) \ln (k+1)} < \frac{1}{(k-1) h_1 (k-1)} < \frac{1}{k \ln k}
\label{eq:inequality}
\end{equation}
so that it partitions the naturally occurring rectangle having upper left corner $\left(k, \frac{1}{k \ln k} \right)$ and lower right corner $ \left(k+1, \frac{1}{(k+1) \ln (k+1)} \right)$ into an upper lightly-shaded rectangle and a lower rectangle whose shading is of intermediate gray-scale intensity.

Form a third step function from the heights of these intermediate segments. Its area is the union of the dark gray and intermediate gray segments; from the inequality above we see that
\[ h_2(n-1) = \frac{1}{1 \cdot h_1(1) } + \frac{1}{2 \cdot h_1(2)} + \cdots + \frac{1}{(n-2) \cdot h_1(n-2)} \] is caught between the upper and lower estimates of $\int_2^n \frac{dx}{x \ln x} \approx \ln_2 x + 0.37\ldots$.  This indicates that the second iterated harmonic numbers are close to the second iterated natural logarithm.


The well-known Euler's constant $\gamma$ expresses the asymptotic difference between natural logarithm and the harmonic numbers:
\begin{equation}
 \gamma = \lim_{n\to\infty} h_1(n) - \ln(n)= \lim_{n\to\infty} l_1(n) - \ln(n) \approx 0.577\ldots, \label{eqn:gamma}
\end{equation}
noting that the definitions of $h_j$ and $l_j$ are not distinct for $j=1$.  In generalizing $\gamma$ for the $j$th iteration, we need to distinguish between $h_j$ and $l_j$ for $j \ge 2$.
We will denote the version defined with $l_j$ as  $\gamma_{j}$ satisfying
\begin{equation}
 \lim_{n\to\infty} l_j{(n)}-\ln_jn=\gamma_j. \label{eqn:gamma-j}
\end{equation}
while the version defined with $h_j$ will be $\gamma_j^{\prime}$ satisfying
\begin{equation}
 \lim_{n\to\infty} h_j{(n)}-l_j{(n)}=\gamma_j^{\prime}, \label{eqn:gamma-j-prime}
\end{equation}
with proof that the limits exist provided in the next section. 

The discussion of the $l_{2} {(n)}$ part of Figure 2 looks a lot like the discussion of  $h_{1}(n)$ in Figure 1. For this reason, the value of any $\gamma_{j}$ is easy to find. However, the discussion of the $h_{2}{(n)}$ part of Figure 2 hints that the problem of finding even $\gamma_{2}^{\prime}$ might not be so easy.  This is what happens. See the section on asymptotic estimates below for more details.

The motivation for Property 4 and its generalization is explained in  \cite{ash-iterated-logarithms-2009}. The infinite series $\sum_{a<k<\infty }\frac{1}{k\ln k\ln _{2}k\cdots \ln_{j-1}k}$ is divergent since by relation (1) its partial sums increase boundlessly. However, for each $\epsilon >0$, the series 
\begin{equation*}
\sum_{a<k<\infty }\frac{1}{k\ln k\ln _{2}k\cdots \ln _{j-2}k\left( \ln _{j-1}k\right) ^{1+\epsilon }}
\end{equation*}
converges. Similar results also hold for iterated harmonic numbers. The infinite series 
\begin{equation*}
\sum_{k=1}^{\infty }\frac{1}{kh_{1}\left( k\right) h_{2}\left(
k\right) \cdots h_{j-1}\left( k\right) }
\end{equation*}
is divergent by relation (2). The integral test shows that for each
$\epsilon >0$, the series
\begin{equation*}
\sum_{k=1}^{\infty }\frac{1}{kh_{1}\left( k\right) h_{2}\left(
k\right) \cdots h_{j-2}\left( k\right) \left( h_{j-1}\left( k\right) \right)
^{1+\epsilon }}
\end{equation*}
converges. These pairs of infinite series are closer to each other as $j$ increases in the sense that the divergent one diverges more slowly while the other one converges more slowly. The logarithmic examples do not have rational partial sums, but the divergent harmonic sums do. If we set $\epsilon =1$, then the convergent harmonic series also have rational partial sums.

Reference \cite{ash-plaza-2021} contrasts the convergence of $\sum_{k=1}^{\infty }\frac{1}{k\left( h_{1}\left( k\right) \right) ^{1+\epsilon }}$ with the divergence of $\sum_{k=1}^{\infty}\frac{1}{kh_{1}\left( k\right) }$. 
The $n$th partial sum of $\sum_{k=1}^{\infty}\frac{1}{k\left( h_{1}\left( k\right) \right)}$ is $h_2(n)$, and its study led us to write the paper.

\section{\mbox{Asymptotic estimates for iterated harmonic numbers}}\label{sec:asymptotic-estimates}

Recall $l_{j}\left( n\right) $ was defined by equation \eqref{eqn:iterated-l-sub-j} to be equal to $\sum_{a<k\leq n}\frac{1}{k\ln k\ln _{2}k\cdots \ln _{j-1}k}$ for an appropriate constant $a$.

\begin{theorem}
\label{th:gamma-j-prime-ok}Notice that $l_{1}\left( n\right) =h_{1}\left( n\right) $ for $n\geq 1$. For each integer $j\geq 2$, there is a constant $\gamma_{j}$ such that
\begin{equation}
    l_{j}(n)-\ln _{j}\left( n\right) =
    \gamma _{j} +
    \frac{1}{2n\ln n\ln_{2}n\cdots \ln _{j-1}n}+O\left( \frac{1}{n^{2}\ln n\ln _{2}n\cdots \ln_{j-1}n}\right) . \label{eqn:estimate-gamma-prime}
\end{equation}
\end{theorem}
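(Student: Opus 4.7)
The plan is to apply Euler--Maclaurin summation to the function
\[
f(x):=\frac{1}{x\ln x\ln_{2}x\cdots\ln_{j-1}x},
\]
which is precisely the integrand whose antiderivative is $\ln_{j}x$. First I would write
\[
l_{j}(n)-\ln_{j}(n)=\left[\sum_{k=a}^{n}f(k)-\int_{a}^{n}f(x)\,dx\right]-\int_{{}^{j-1}e}^{a}f(x)\,dx
\]
and apply the standard trapezoidal identity
\[
\sum_{k=a}^{n}f(k)-\int_{a}^{n}f(x)\,dx=\frac{f(a)+f(n)}{2}+\int_{a}^{n}\bigl(\{x\}-\tfrac{1}{2}\bigr)f'(x)\,dx,
\]
which makes the expected main term $\tfrac{f(n)}{2}=\frac{1}{2n\ln n\cdots\ln_{j-1}n}$ already visible.

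Since $f$ is positive, smooth, and monotonically decreasing to $0$ on $[a,\infty)$, one has $\int_{a}^{\infty}|f'(x)|\,dx=f(a)<\infty$, so $\int_{a}^{\infty}(\{x\}-\tfrac{1}{2})f'(x)\,dx$ converges absolutely. This lets me define
\[
\gamma_{j}:=\frac{f(a)}{2}-\int_{{}^{j-1}e}^{a}f(x)\,dx+\int_{a}^{\infty}\bigl(\{x\}-\tfrac{1}{2}\bigr)f'(x)\,dx,
\]
so that
\[
l_{j}(n)-\ln_{j}(n)=\gamma_{j}+\frac{f(n)}{2}-\int_{n}^{\infty}\bigl(\{x\}-\tfrac{1}{2}\bigr)f'(x)\,dx.
\]
This simultaneously proves existence of the limit \eqref{eqn:gamma-j} and isolates the explicit main term; it remains only to bound the tail integral by $O\bigl(n^{-2}(\ln n\cdots\ln_{j-1}n)^{-1}\bigr)$.

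The main technical step is this tail estimate, which I would obtain by one integration by parts against the periodic Bernoulli antiderivative $P_{2}(x):=\tfrac{1}{2}(\{x\}^{2}-\{x\})$. This function is $1$-periodic, vanishes at every integer, is bounded by $\tfrac{1}{8}$, and satisfies $P_{2}'(x)=\{x\}-\tfrac{1}{2}$. Since $n$ is an integer and $P_{2}(x)f'(x)\to 0$ as $x\to\infty$, the boundary contributions vanish and
\[
\int_{n}^{\infty}\bigl(\{x\}-\tfrac{1}{2}\bigr)f'(x)\,dx=-\int_{n}^{\infty}P_{2}(x)f''(x)\,dx.
\]
To bound $|f''|$, compute the logarithmic derivative
\[
g(x):=\frac{f'(x)}{f(x)}=-\frac{1}{x}-\frac{1}{x\ln x}-\cdots-\frac{1}{x\ln x\cdots\ln_{j-1}x};
\]
the leading term $-1/x$ dominates, a direct differentiation gives $g'(x)=1/x^{2}+o(1/x^{2})=O(1/x^{2})$, and hence $f''(x)=f(x)\bigl(g(x)^{2}+g'(x)\bigr)=O\bigl(f(x)/x^{2}\bigr)$. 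The crude inequality $\ln x\cdots\ln_{j-1}x\ge\ln n\cdots\ln_{j-1}n$ on $[n,\infty)$ then yields
\[
\int_{n}^{\infty}|f''(x)|\,dx=O\!\left(\frac{1}{\ln n\cdots\ln_{j-1}n}\int_{n}^{\infty}\frac{dx}{x^{3}}\right)=O\!\left(\frac{1}{n^{2}\ln n\cdots\ln_{j-1}n}\right),
\]
which matches the required error. The only real obstacle is the bookkeeping of the nested logarithms when differentiating $f$ twice; conceptually the argument is just Euler--Maclaurin to first order with a single remainder step.
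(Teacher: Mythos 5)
Your proposal is correct and is essentially the paper's proof carried out in full: the paper simply says to apply the Euler summation formula to $f(x)=\frac{1}{x\ln x\cdots\ln_{j-1}x}$ using the bound $f''(x)=O\bigl(\frac{1}{x^{3}\ln x\cdots\ln_{j-1}x}\bigr)$ (citing Apostol), and your first-order Euler--Maclaurin expansion with the $P_{2}$ integration by parts and the tail estimate is exactly that argument made explicit.
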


\begin{proof}
The function $f\left( x\right) =\frac{1}{x\ln x\ln _{2}x\cdots \ln _{j-1}x}$ has antiderivative $\ln _{j}\left( x\right) $ and satisfies  $f^{\prime \prime }\left( n\right) =O\left( \frac{1}{n^{3}\ln n\ln _{2}n\cdots \ln_{j-1}n}\right) $. Apply the Euler summation formula to $f$. See exercises 2 and 6 of section 15.23 of \cite{apostol-calculusii-1969}.
\end{proof}

Once $\gamma$ had been discovered, the next step was finding accurate approximations for its numerical value. Euler himself found the values of the first sixteen significant figures of its decimal expansion in 1736. More than a hundred million of its decimal places had been filled in by 2004 \cite{Gourdon-Sebah}.

We write $f\left( n\right) \asymp g\left( n\right) $ to mean that there are
two positive constants $a$ and $b$ so that $af\left( n\right) <g\left(
n\right) <bf\left( n\right) $; in words, $f$ is of the same order as $g$ 
\cite[page 7]{hardy-wright-number-1960}.

Abbreviate $h_1$ to $h$. Here are three approaches for getting an estimate for $\gamma$:

Minimal method: Prove that $\lim_{n\to\infty}(h(n) - \ln n)$ exists and call it $\gamma$. Then choose any large integer for $n$ and calculate ${h(n) - \ln n}$ .

Standard method: Prove that $\gamma$ exists and establish $h\left( n\right) -\ln n-\gamma \asymp \frac{1}{n}$. Rearrange this, moving $\gamma$ to the right side of the relation, shows us that  $h\left( n\right) -\ln n$ tends to $\gamma$ with an error on the order of $\frac{1}{n}$.

Improved method: Prove that  $\gamma$ exists and establish $h\left( n\right) -\ln n-\gamma-  \frac{1}{2n}\asymp \frac{1}{n^2}$. As in the standard method, isolate $h\left( n\right) -\ln n$ on the left hand side of the relation and thereby observe that the speed of approach to $\gamma$ is now improved to be of the order of  $\frac{1}{n^2}$. 

To illustrate the virtue of the improved method, we use the number $5$ as a ``large'' integer and attempt to estimate  $\gamma ={.577\dots}$. The standard method estimate is 
$h\left(5\right) -\ln 5 = 1+\frac{1}{2} \cdots+\frac{1}{5}-\ln 5 = .674\dots$ and the improved method estimate is ${.674\dots}-\frac{1}{2\cdot5} ={.674\dots}-.1=.574\dots$.




\begin{theorem}
\label{th:gamma-j-not-ok}
 For each integer $j\geq 2$, $h_{j}\left( n\right) $ tends to $\infty $ at the same rate as the $j$th
iterated logarithm. This is an immediate consequence of the relations that for every $j=2,3,\dots $, there are
constant $\gamma _{j}^{\prime}$ such that
\begin{equation}
h_{j}\left( n\right) - l_{j}\left(n\right) \asymp  \gamma _{j}^{\prime}+ \frac{1}{\ln _{j-1}n}.
\label{eqn:generalized-eulers-constant}
\end{equation}
\end{theorem}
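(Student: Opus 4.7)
The plan is to prove the relation by induction on $j$, combining Theorem~\ref{th:gamma-j-prime-ok} with a first-order asymptotic expansion of the product $h_1(k) \cdots h_{j-1}(k)$ that appears in the summand defining $h_j$.

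For the base case $j = 2$, I would start from the refinement $h_1(k) = \ln k + \gamma + O(1/k)$, which is just Theorem~\ref{th:gamma-j-prime-ok} at $j = 1$. Substituting into $\tfrac{1}{k h_1(k)}$ and expanding the geometric series in $\gamma/\ln k$ gives
\[
\frac{1}{k h_1(k)} - \frac{1}{k \ln k} = -\frac{\gamma}{k (\ln k)^2} + O\!\left(\frac{1}{k(\ln k)^3}\right) + O\!\left(\frac{1}{k^2 \ln k}\right),
\]
whose partial sums converge. Summing from $k = 2$ to $n$ establishes existence of $\gamma_2'$ and, via the integral test applied to $1/(k(\ln k)^2)$, shows that the remainder $h_2(n) - l_2(n) - \gamma_2'$ has order $1/\ln n$.

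For the inductive step I would upgrade the target to the stronger estimate $h_i(k) = \ln_i(k) + C_i + O(1/\ln_{i-1}(k))$ for $i = 1, 2, \ldots, j-1$, where $C_i := \gamma_i + \gamma_i'$ (with the convention $\gamma_1' = 0$, since $l_1 = h_1$). Writing $\delta_i(k) := h_i(k) - \ln_i(k) = C_i + O(1/\ln_{i-1}(k))$ and expanding
\[
\prod_{i=1}^{j-1} \frac{1}{h_i(k)} = \prod_{i=1}^{j-1} \frac{1}{\ln_i(k)} \prod_{i=1}^{j-1} \left(1 + \frac{\delta_i(k)}{\ln_i(k)}\right)^{-1},
\]
the leading first-order correction is $-\sum_{i=1}^{j-1} \delta_i(k)/\ln_i(k)$ times the unperturbed product. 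Because $\ln_{j-1}(k)$ is the smallest iterate, the $i = j-1$ contribution dominates, yielding
\[
\frac{1}{k h_1(k) \cdots h_{j-1}(k)} - \frac{1}{k \ln k \cdots \ln_{j-1}(k)} \sim -\frac{C_{j-1}}{k \ln k \cdots \ln_{j-2}(k) \, (\ln_{j-1}(k))^2}.
\]
The right-hand side is summable, its partial sum defines $\gamma_j'$, and its tail from $n+1$ to $\infty$ is $\asymp 1/\ln_{j-1}(n)$ by the integral test (using $u = \ln_{j-1}(k)$). Combining with Theorem~\ref{th:gamma-j-prime-ok} closes the induction by producing the next refined estimate $h_j(k) = \ln_j(k) + C_j + O(1/\ln_{j-1}(k))$, and simultaneously yields the claimed $\asymp$: $h_j(n) - l_j(n) - \gamma_j'$ is of order $1/\ln_{j-1}(n)$, while the constant $\gamma_j'$ itself persists.

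The main obstacle is bookkeeping. One must propagate the asymptotic $h_i = \ln_i + C_i + O(1/\ln_{i-1})$ through the induction with enough uniformity that the multi-factor product expansion can be truncated at first order without the accumulated error from the $i < j-1$ factors overwhelming the $i = j-1$ main term. A carefully formulated inductive hypothesis, supplemented if necessary by one application of Euler summation at each level (as was done in Theorem~\ref{th:gamma-j-prime-ok}), should supply the required precision.
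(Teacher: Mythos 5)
Your proposal is correct and takes essentially the same route as the paper: compare $h_j(n)$ and $l_j(n)$ termwise, observe that the termwise differences form a convergent series whose sum is $\gamma_j^{\prime}$, and estimate the tail by the integral test to obtain the $1/\ln_{j-1}n$ rate. Your product expansion with the inductive estimate $h_i(k)=\ln_i(k)+C_i+O(1/\ln_{i-1}(k))$ is just a more explicit rendering of the paper's ``change one iterated logarithm into an iterated harmonic number in $j-1$ steps,'' and your $j=2$ term $-\gamma/(k(\ln k)^2)$ is in fact slightly sharper than the paper's sketch.
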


For $j \geq 2$, we made an arbitrary choice of $^{j-1}e$ for the lower limit of integration. Any value in the interval $\left( ^{j-1}e,\infty \right)$ would also have worked. For example,
when $j = 3$ we chose the domain of integration to be $\left[^{2}e,n\right] \approx \left[15.15,n\right] $, but $\left[ 3,n\right]$ would also have worked since $3 > {}^{1}e = e$. The value
of the generalized Euler constant $\gamma_j$ for each $j \geq 2$ depends on the choice of the corresponding lower limit.

On the positive side, this theorem establishes the existence of the $\gamma_j^{\prime}$ for all $j=2,3,\cdots$. But on the negative side, this theorem  proves that the natural extension
 of Property 3 converges so slowly that it can not provide an efficient way to compute the numerical value of $\gamma_j^{\prime}$ when $j\geq 2$.
 
The finding in Theorem~\ref{th:gamma-j-prime-ok} that the error in  relation~\eqref{eqn:estimate-gamma-prime} is smaller than $O(\frac{1}{n^2})$ implies that the left side of that relation well approximates the $\gamma_j$. Thus to prove Theorem~\ref{th:gamma-j-not-ok} it is suffcient to examine only $h_j(n)-l_j(n)$ instead of  $h_j(n)-\ln_j(n)$. Here is a sketch of the proof when $j=2$. Fix $n$ and write
 
 $$h_2(n)-l_2(n) =  \sum_{k=2}^{n } \frac{1}{kh_1(k)} - \frac{1}{k\ln(k)}. $$
 
The $k$th term of the sum is asymptotically close to $-\frac{1}{k(\ln k)^2}$. The integral test shows that the series $\sum_{k=2}^\infty \frac{1}{k(\ln k)^2}$ converges and also that its $n$th partial tail is on the order of $\frac{1}{\ln n}$ so that the difference between the sum and its  $n$th partail sum is of the order of $\frac{1}{\ln n}$.
 
To do the proof for general $j$, change the denominator from $k{\ln k}{\ln_{2}  k}\cdots{\ln_{j-1} k}$ into $k{h_1(k)}{h_{2}( k)}\cdots{h_{j-1}(k)}$ in $j-1$ steps, each step changing an iterated logarithm into a corresponding iterated harmonic number,  and measure the effect of each step. This is a little bit complicated and not too interesting.

\section{\mbox{The \emph{\lowercase{p}}-adic valuation and a proof of Property~5}}
\label{sec:valuations-prop5}

We now define $p$-adic valuations and use $2$-adic valuations to prove Property 5. Then we study empirically the conjecture that Property 5 extends to $h_{j}\ $for every $j$.

Fix a prime number $p$.  We define the $p$-adic valuation $\nu_p$ on the field of rational numbers by $\nu_p (0)=-\infty$ and for all other rationals $a/b$ we define $\nu_p$ to be $r$ where $a/b = p^r\cdot a'/b'$ with $a',b'$ coprime to $p$.\footnote{The  literature splits on defining \emph{valuation} either: as the  $p$-adic \emph{norm}, $\left|p^r\cdot a'/b'\right|_p = p^{-r}$   except $\left| 0 \right|_p = 0$; or as the convention we follow,   $\nu_p(p^r\cdot a'/b') = r$ except $\nu_p(0)=-\infty$. The norm   definition satisfies the triangle inequality,  $\left| x  y \right|_p \le \left| x \right|_p  \left| y   \right|_p$, and the strong triangle inequality,   $\left| x  y \right|_p \le \max(\left| x \right|_p , \left| y   \right|_p)$. The equality case of the strong triangle inequality is   equivalent to equation~\eqref{eqn:strongtriangle}.}
Examples: $$\nu_2 (8)=3, \nu_2 (1/8)=\nu_2 (-5/8)=\nu_2 (-5/56)=\nu_2 (10/112)=-3.$$

Notice in particular that the $p$-valuation of a fraction remains the same whether or not that fraction is reduced.  We will make use of an important equality satisfied by every $\nu_p$. It asserts that 
\begin{equation}
\nu_p(x+y)= \min\left \{\nu_p(x),\nu_p(y)\right\} \textrm{ when } \nu_p(x) \neq \nu_p(y).  \label{eqn:strongtriangle}
\end{equation}

We generalize this to
\begin{align}
\nu_p(x_1+x_2+\cdots+x_n)= \min\left \{\nu_p(x_1),\ldots,\nu_p(x_n)\right\} \nonumber \\ \textrm{when the minimum occurs exactly once}.  \label{eqn:strongtriangleapplication}
\end{align}
To see why this is true, we write out the $n=3$ case. Without loss of generality, suppose $x_i=p^{r_i}\frac{a_i}{b_i}, i=1,2,3$ with all $a_i,b_i$ coprime with $p,r_2>r_1$, and $r_3>r_1$. Compute

$$x_1+x_2+x_3=p^{r_1}\frac{a_1}{b_1}+p^{r_2}\frac{a_2}{b_2}+p^{r_3}\frac{a_3}{b_3}=$$
$$p^{r_1}\frac{a_1b_2b_3+p^{r_2-r_1}a_2b_1b_3+p^{r_3-r_1}a_3b_1b_2}{b_1b_2b_3}.$$

The denominator is coprime to $p$. The numerator is congruent mod $p$ to $a_1b_2b_3$ and hence is also coprime with $p$.  So $\nu_p(x_1+x_2+x_3)=r_1$.

Property~5 was proved by Theisinger in 1915 \cite{theisinger-bemerkung-1915}. Conway and Guy \cite{conway-guy-1996} present his proof thus: Look at the term [of $h_1(n)$] with the highest power of $2$ in it. It has nothing with which to pair. So $h_1(2),h_1(3),h_1(4),\cdots$ have odd numerator and even denominator.

This proof is correct but insufficiently explicit. We now give our version using the language of valuations.

Note that a rational number with a negative $2$-valuation cannot be an integer. More precisely, if a rational number $a/b$ is such that 
$\nu_2 (a/b) =- r < 0$ then $b$  is divisible by the even integer $2^r$. Thus the absolute value of the denominator remains at least $2$, even after writing $a/b$ in reduced form, and $a/b$ is not an integer. So to prove Property~5, we need only show $\nu_2 (h_1(n)) < 0$ for all $n \geq 2$. 

Fix $n\geq 2$. Choose $r$ maximal so that $2^{r}\leq n$.  We note that $\nu_2(\frac{1}{2^r})=-r$. Also by equation \eqref{eqn:strongtriangleapplication},
$$\nu_2(h_1(n))=\nu_2(1+\frac{1}{2}+\frac{1}{3}+\cdots+\frac{1}{2^r}+\cdots+\frac{1}{n})=-r.$$

This holds since for each positive integer $k=2^\rho \sigma$ with $\rho$ an integer and $\sigma$ an odd integer such that $k\in [1,n]$ and  $k\neq {2^r}$, the definition of $r$ forces $\rho$ to be less than $r$. So $\nu_2(\frac{1}{k})= -\rho >-r;$ the unique term where the minimum $-r$ occurs is $ \frac{1}{2^r}.$

While the Theisinger proof of Property~5 requires only arithmetic, we also provide a simple and elegant proof by K{\"u}rschák \cite{Kurschak} that invokes a number theory result.  Bertrand's Postulate, i.e., there is always a prime $p$ in $\left( {\lfloor {n/2} \rfloor,n} \right]$ (see \cite{AZ} for a proof), implies that $h_1(n) = \sum_1^n \frac{1}{k} = 1  + \frac{1}{2} + \cdots + \frac{1}{\lfloor n/2 \rfloor} + \cdots + \frac{1}{p} + \cdots + \frac{1}{n} = \frac{1}{p}  \frac{a}{b}$ (with $a,b$ both integers) is never an integer.  Every denominator in the sum, except $p$ itself, is relatively prime to $p$. (Other than $p$, the smallest number not relatively prime to $p$ is $2p$, which is too large to be among the denominators ending at $n$.)  The denominator $b$ of the sum of fractions with denominators relatively prime to $p$ is also relatively prime to $p$. Thus $ h_1(n) =\frac{1}{p}  \frac{a}{b}$  is not an integer, since marching from $1/p$ in $a$ steps of length $1/b$ cannot reach an integer. To formalize this, change scale by multiplying everything by $b$. We are now marching from $b/p$ to $bh_1(n)$ in $a$ steps of length $1$. Since $b$ and $p$ are relatively prime, $b/p$ is not an integer, and so neither is its translation $bh_1(n)$ nor is $h_1(n)$.

\section{\mbox{A conjecture for \emph{\lowercase{j}}-times iterated harmonic numbers}}\label{sec:conjectures}

From the definitions, it is immediate that for each $j\geq 1$, there holds the identity 
\begin{equation*}
h_{j}\left( 1\right) =1.
\end{equation*}

\begin{conjecture}\label{conjecture:no-integers}
For each integer $j\geq 2$, the only $j$th iterated harmonic number that is an integer is $h_{j}\left( 1\right) $.
\end{conjecture}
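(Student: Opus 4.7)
The plan is to generalize the Theisinger $p$-adic argument used for Property~5. It suffices to show, for every $n \geq 2$, that some prime $p$ satisfies $\nu_p(h_j(n)) < 0$, since any rational with negative $p$-adic valuation has reduced denominator divisible by $p$ and hence cannot be an integer. Writing $h_j(n) = \sum_{k=1}^n t_k$ with $t_k = 1/(k h_1(k) \cdots h_{j-1}(k))$, the generalized strong triangle identity \eqref{eqn:strongtriangleapplication} reduces the problem to finding, for each $n$, a prime $p$ such that the minimum of $\{\nu_p(t_k)\}_{k \leq n}$ is negative and is attained at exactly one value of $k$.

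I would proceed by induction on $j$. Each step of the induction requires control of $\nu_p(h_i(k))$ for $i < j$, from which
$$\nu_p(t_k) = -\nu_p(k) - \sum_{i=1}^{j-1} \nu_p(h_i(k)),$$
and then one searches for a prime $p = p(n,j)$ producing a uniquely attained negative minimum. Two natural candidates suggest themselves: the Theisinger choice $p=2$, and the K\"urschák-style choice of the largest prime in $(n/2, n]$ guaranteed by Bertrand's postulate.

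Already at $j = 2$ these candidates reveal the difficulty. Using $\nu_2(h_1(k)) = -\lfloor \log_2 k \rfloor$ (the Theisinger formula), a direct calculation gives $\nu_2(t_k) = \lfloor \log_2 k \rfloor - \nu_2(k) \geq 0$, with equality at every power of $2$ up to $n$: the minimum is neither negative nor uniquely attained, so $p=2$ no longer suffices to force $h_2(n)$ to be non-integer by this route. For the K\"urschák choice one can verify $\nu_p(h_1(k)) = -1$ for $k \in [p, 2p)$, but already one level up, $\nu_p(h_2(k))$ is governed by delicate cancellations between numerators and denominators in the defining sum, and no clean uniqueness statement is visible.

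The main obstacle is therefore the erratic $p$-adic behavior of $h_i(k)$ for $i \geq 2$: unlike $h_1$, the iterated harmonics have $p$-adic valuations that fluctuate unpredictably, and producing a prime with a uniquely attained negative minimum seems to require either a case-by-case analysis depending on $n$, or substantial number-theoretic input---sharper results on primes in short intervals, or structural information about the numerators of iterated harmonic numbers---going well beyond Bertrand's postulate. It is presumably this delicate $p$-adic arithmetic that has kept the conjecture open for $j \geq 2$ despite the strong numerical evidence.
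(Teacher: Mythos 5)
The statement you are addressing is not a theorem in the paper but an open conjecture: the authors offer no proof, only numerical $p$-adic evidence for $j=2$ and $j=3$ and the observation that the $j=1$ argument (even denominator of $h_1(n)$) cannot carry over to $j=2$ because the computed $2$-valuations of the denominator of $h_2(n)$ are always $0$. Your proposal, candidly, is also not a proof; it is a strategy outline that ends by naming the obstruction rather than overcoming it. The genuine gap is therefore that you never exhibit, for a single pair $(j,n)$ with $j\geq 2$, a prime $p$ for which the minimum of $\nu_p(t_k)$ over $k\leq n$ is negative and uniquely attained, nor any substitute mechanism forcing $h_j(n)\notin\mathbb{Z}$; the induction on $j$ you sketch cannot even start, because controlling $\nu_p(h_i(k))$ for $i\geq 2$ is precisely the unsolved problem. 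So nothing toward the conjecture is established --- which is consistent with its status in the paper as open.

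That said, your preliminary analysis is correct and is in one respect sharper than what the paper states. From $\nu_2(h_1(k))=-\lfloor\log_2 k\rfloor$ you get $\nu_2\bigl(1/(kh_1(k))\bigr)=\lfloor\log_2 k\rfloor-\nu_2(k)\geq 0$, and the ultrametric inequality then gives $\nu_2(h_2(n))\geq 0$ for all $n$: this is an actual proof that the reduced denominator of $h_2(n)$ is always odd, upgrading the paper's empirical finding (``the $2$-valuation is always $0$'') to a theorem and definitively ruling out any even-denominator proof at $j=2$, not merely for the computed range. Your observation that the K\"urschák/Bertrand choice also breaks down at the second level, because $\nu_p(h_2(k))$ is governed by cancellations with no visible uniqueness statement, matches the authors' own assessment that the even-$j$ cases look hard while odd $j$ (e.g.\ $j=3$, where the data keep the $2$-valuation of the denominator positive) may be more tractable. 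If you want to push further, the most promising concrete target suggested by both your analysis and the paper's data is to prove a lower bound such as $\nu_2(\mathrm{denom}(h_3(n)))>0$ for all $n\geq 2$, i.e.\ $\nu_2(h_3(n))<0$, rather than to attack general $j$ at once.
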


When $j=1$, the statement of the conjecture becomes Property~5, which was proved above. The proof relies on the fact that for all $n\geq 2$, reduced $h_1(n)$ has an even denominator and so cannot be an integer. Demonstrating that the denominator of $h_j(n)$ is even would prove the conjecture. We present numerical data for the cases of $j=3$ and $j=2$. Our computations indicate that the even-denominator property holds for $h_3$. The case of $h_2$ is different, but our evidence, presented in Table~1, suggests a different argument can be found here also.

In computing numerical evidence, we restrict interest to the cases of $j=2$ and $j=3$. Write every rational number $h_{j}\left( n\right) $, $j=2,3$; $n=2,3,\dots $ as a reduced ratio of positive integers, $\frac{n_{j}\left( n\right) }{d_{j}\left( n\right) }$. It suffices to prove that each such denominator $d=d_{j}\left( n\right) $ is at least $2$. All the evidence we have accumulated points to the conjecture's truth. The rough argument is that the denominators of both $h_{2}\left( n\right) $ and $h_{3}\left( n\right) $ seem to grow steadily larger as $n$ increases, whereas the failure of the conjecture would have a denominator of 1 occurring. The proof for the $h_{1}$ case involved showing that all denominators were even and hence not equal to $1$. The evidence for $h_{3}$ does not rule out such a proof. The evidence for $h_{2}$ rules out an even denominator proof. Nevertheless it suggests many other possible proofs. If positive integer $d$ is divisible by prime $p$ then the $p$-valuation of $d$ is $\max \left\{ \nu :p^{\nu }|d\right\}$.  The $p$-valuation of $d$ is zero when $p$ does not divide $d$.  Notice that the proof of Property 5 given above shows that the $2$-valuation of the denominator of $h_{1}\left( n\right)$ is $\left\lfloor \log_2 n\right\rfloor$, where $ \log_2 n$ denotes the base-$2$ logarithm of $n$, the function inverse to base-$2$ exponentiation.

\textbf{Numerical evidence when }$j=3:$ We computed the $2$-valuation of the (reduced) denominator of $h_{3}\left( n\right) $ for $n=1$ to $2,000$. For $n=1$, the $2$-valuation is, of course,~$0$. For $n=2$ to $n=11$ the $2$-valuation is $2$. For $n=12$, the $2$-valuation is, shockingly,~$0$. (Nonetheless, $h_3(12)$ is not an integer.)  From $n=13$ through $n=31$ the $2$-valuation is $3$. From $n=32$ to $n=2,000$, the $2$-valuation is always equal to $6$.

\textbf{Numerical evidence when }$j=2:$ We attempted to calculate the $p$-valuation for the denominator of $h_{2}\left( n\right) $ for $n=1$ to $n=40,000$ for all 46 primes less than $200$. We ceased computation when we hit computer system limits at $n=27,477$. The $2$-valuation is always $0$. The $3$-valuation is $1$ from $n=2$ to $n=53$. The $3$-valuation then alternates irregularly between 0 and 1. The left panel of Table~\ref{tab:p-vals-of-H2} shows the variation in the 3-valuation of $h_{2}\left( n\right) $ up to $n=27,477$.

\begin{table}[h]\small
\begin{center}
  \centering
\begin{tabular}{rc|rc}
\hline\hline 
\multicolumn{2}{c|}{3-valuation of $\mbox{denom}(h_2\left(n\right))$} & 
\multicolumn{2}{c}{97-valuation of $\mbox{denom}(h_2\left(n\right))$} \\ \hline
$n$           & 3-valuation & $n$           & 97-valuation \\ \hline\hline
1             & 0           & 1--10         & 0            \\ 
2--53         & 1           & 11--95        & 1            \\ 
54--62        & 0           & 96--9,322     & 2            \\ 
63--65        & 1           & 9,323--9,407  & 1            \\ 
66--161       & 0           & 9,408--27,477 & 0            \\ 
162--188      & 1           &               &              \\ 
189--197      & 0           &               &              \\ 
198--1,457    & 1           &               &              \\ 
1,458--1,700  & 0           &               &              \\ 
1,701--1,781  & 1           &               &              \\ 
1,782--4,373  & 0           &               &              \\ 
4,372--5,102  & 1           &               &              \\ 
5,103--5,345  & 0           &               &              \\ 
5,346--27,477 & 1           &               & 
\end{tabular}
\end{center}
  \caption{Selected $p$-valuations of the denominator of $h_2\left(n\right)$. These data were calculated using PARI/GP \cite{PARI2}.\label{tab:p-vals-of-H2}}
\end{table}

The $5$-valuation is $2$ from $n=4$ to $n=2,499$. The $5$-valuation is then $0$ from $n=2,500$ to $n=2,999$. The 5-valuation is then 1 from $n=3,000$ to 
$n=12,499$, and then the $5$-valuation remains at $2$ through the end of the
run (at $n=27,477$).

Of the 43 remaining primes less than $200$, all of them enter with non-zero $p$-valuations as $n$ grows. For some primes the first non-zero valuation is $1$ and in other cases the first non-zero $p$-valuation is $2$. For example, up to $n=6$, the 7-valuation is $0$; beginning with $n=6$, the $7$-valuation is $2$ through the end of the run. Only the $11$-valuation exceeds $2$ in the run; at $n=848$, the $11$-valuation of $h_{2}\left( n\right) $ becomes $3 $ and remains $3$ through the end of the run.

With a single exception among the primes between 7 and 200, once the prime acquires a non-zero $p$-valuation, the $p$-valuation does not decline. At $n=9,323$, the $97$-valuation returns to $1$ (from $2$), and beginning at $n=9,408$, the $97$-valuation falls to 0 where it remains through the remainder of the run. Thus, from $n=9,408$ through the end of the run ($n=27,477$), $2$ and $97$ are the only primes less than $200$ for which in the denominator of $h_{2}\left( n\right) $ has a $p$-valuation of zero. The behavior of the $97$-valuation is tabulated in right panel of Table~\ref{tab:p-vals-of-H2}.

Our best guess is that all the odd $h_{j}$ have similar behavior so that the conjecture will be true for the $j$th iterated harmonic numbers and provable by showing the $2$-valuation of the denominators to be positive. We also guess that the conjecture will hold for all the even $h_{j}$, but that the proof will be quite difficult.

\section{\mbox{The hyperharmonic numbers of Conway and Guy}}\label{sec:conway-guy-1996}

As our last topic, we compare our iterated harmonic numbers to the hyperharmonic numbers of Conway and Guy \cite{conway-guy-1996},  a different set of sequence generalizing the harmonic numbers.  In \emph{The Book of Numbers}, Conway and Guy generalize the harmonic numbers to the hyperharmonic numbers \cite{conway-guy-1996}.
 In their notation, the sequence of harmonic numbers are designated as $H_n^{(1)}$, the sequence of second harmonics is defined by 
 $H_n^{(2)}= H_1^{(1)}+H_2^{(1)}+\cdots+ H_n^{(1)}$, 
the sequence of third harmonics is defined by $H_n^{(3)}= H_1^{(2)}+H_2^{(2)}+\cdots+ H_n^{(2)}$ , and so on.

\cite{conway-guy-1996} presents the hyperharmonic numbers without motivating their existence. A natural motivation comes from summability of infinite series. We look at Ernest Cesàro's sequence of summation methods $(C,k),k=0,1,2,\dots$. (See section 5.4 of \cite{hardy-divergent-1973}.) Let $\Sigma = \sum_{i=1}^\infty{a_i}$ be an infinite series. Let $A_n^{(1)}$ be the sequence of partial sums of the series, let  $A_n^{(2)}= A_1^{(1)}+A_2^{(1)}+\cdots+A_n^{(1)}$, and  $A_n^{(3)}=A_1^{(2)}+A_2^{(2)}+\cdots+ A_n^{(2)}$, and so on. 

Obviously the hyperharmonic number $H_n^{(k)}$ is exactly the number $A_n^{(k)}$ for the special case where the original series is specialized to $a_i = \frac1i$ for all $i$. However it seems like the motivation goes no deeper. The next paragraph sketches how the $A_n^{(k)}$ fit into summability theory. 

When $\lim_{n\to\infty}A_n^{(1)} = A^{(1)}$, say that $\Sigma$  is $(C,0)$ summable to $ A^{(1)}$, so  $(C,0)$ summability is ordinary convergence. When $\lim_{n\to\infty}\frac{A_n^{(2)}}{ n} = A^{(2)}$, say that $\Sigma$  is $(C,1)$ summable to $ A^{(2)}$. When $\lim_{n\to\infty}\frac{A_n^{(3)}}{\binom{n+1}{2}} = A^{(3)}$, say that $\Sigma$  is $(C,2)$ summable to $ A^{(3)}$.  For $ i<j, A^{(i)}$ exists implies $A^{(j)}$ exists and  $A^{(j)} =  A^{(i)}$. The series $1-1+1-1+\cdots$ is not  $(C,0)$ summable, but is   $(C,1)$ summable to $\frac{1}{2}$. The series $1-2+3-4+\cdots$ is not  $(C,1)$ summable, but is  $(C,2)$ summable to $\frac{1}{4}$.  

During the twenty year period after the creation of the hyperharmonic numbers, a lot of evidence, both numerical and theoretical, was piling up in support of the conjecture that 1 was the only hyperharmonic integer. However, in 2020, some very large hyperharmonic integers were revealed, the smallest of which is $H_{33}^{( 64(2^{2659}-1)+32)}$  \cite{Sertbas-2020}.
 
\section{Musings}\label{sec:musings}
 
For each $j$, the sequence $\{h_j(n)\}$ is increasing in $n$. For the harmonic numbers and for $j=2$ the sequences are concave, i.e., $h_j(n+2) - 2h_j(n+1)+h_j (n) <0$ for $n=1,2,\ldots$ It might be interesting to check monotonicity in $j$ (for fixed $n$) and concavity of $\{h_j(n)\}$ for $j>2$. Another interesting, accessible problem would be to prove the inequalities in Equation~\ref{eq:inequality}.

For each $j$, our iterated harmonic numbers $h_j(\cdot)$ and the Conway--Guy hyperharmonic numbers $H_\cdot^{(j)}$ are sequences of rational numbers tending to infinity. When $j=1$, both generalizations coincide with the harmonic numbers and hence contain no integers larger than $1$. The naïve guess that the ever more slowly increasing sequences $h_j$ are more likely to intersect the integers than are the much more rapidly increasing $H^{(j)}$---and that this effect increases with increasing $j$---is revealed as overly simplistic by our computations and Sertba\c{s}' example. With respect to the Conjecture~\ref{conjecture:no-integers}, we have a feeling that the odd iteration cases all involve the $2$-valuation and may be more like the $j=1$ classical case and the even iteration cases may all be similar to the $j=2$ case. In particular, $j=3$ may be the easiest of all the open cases.

Comparing Theorems~\ref{th:gamma-j-prime-ok} and \ref{th:gamma-j-not-ok} shows that the $\gamma_{j}$ can easily be estimated to several decimal places, while the 
$\gamma _{j}^{\prime}$ cannot. That is, although $h_{j}-\gamma _{j}^{\prime}$ is indeed an estimator for the $j$th iterated logarithm, it is not nearly as good as is $l_{j}-\gamma _{j}$. The $h_{j}$ are rational but do not give practical approximations for iterated logarithms. Even for $\gamma_2$ and $\gamma_3$, approximation to two significant figures would likely require a new idea.

An infinite list of difficult open questions asks whether the $\gamma _{j}$ and $\gamma _{j}^{\prime }$ are transcendental or, at least, irrational. This is a well known open question when $j=1$ and $\gamma _{1}=\gamma _{1}^{\prime }=\gamma $, where $\gamma =.577\ldots$ is Euler's constant. For each $j \geq 2$, the value of the $\gamma_j$
appearing in Theorem 2 depended on an arbitrary choice of a lower limit of an integration. If someone could determine a canonical or natural way of making that choice, then the
questions about $\gamma_j$ for $j \geq 2$ would become more important. 

We have not found any direct generalizations of the harmonic numbers other than the Conway-Guy hyperharmonic numbers and our own iterated harmonic numbers.  It would be interesting to see if there have been other generalizations. One place to look could be among various generalizations of Euler's constant~$\gamma $ associated with Stieltjes.

  We thank Stefan Catoiu, Gang Wang and anonymous referees for suggestions that improved the paper.

\bibliography{IteratedHarmonicNumbers}

\end{document}